\newtheorem{thm}{Theorem}
\newtheorem{prop}[thm]{Proposition}
\def\R{\mathbb{R}}
\def\epar{\partial_{\varepsilon}}
\def\pa{\partial }
\def\dom{\mathrm{dom}\,}
\def\e{\varepsilon}
\begin{document}

\title{Maximal monotonicity of the subdifferential\\of a
convex function: a direct proof\thanks{Dedicated to Prof. R. T. Rockafellar on the occasion of his 80th anniversary.}}

\author{Milen Ivanov\thanks{Partially supported by Bulgarian National Scientific Fund under Grant DFNI-I02/10 and by
Grant 239/2016 of the  Science Fund of Sofia University.}       \and
        Nadia Zlateva\thanks{Partially supported by Bulgarian National Scientific Fund under Grant DFNI-I02/10.}}

\date{May, 2015}

\maketitle

\begin{abstract}
We provide a new proof for maximal monotonicity of the
subdifferential of a convex function.
\end{abstract}

\textbf{Keywords:}  Subdifferential, convex function, maximal monotonicity.

\textbf{Subject Classification} 49J52,   47H05,  49J45.

\section{Introduction}
\label{intro}
Let $X$ be a Banach space with dual $X^*$. Let $f:X\to\R\cup\left\{ +\infty\right\} $
be a proper, convex and lower semicontinuous function.

Recall that for $x_{0}\in\dom f:=\left\{ x:\, f(x)\neq +\infty\right\} $ the subdifferential of $f$ at $x_0$ in the sense of convex analysis is the set
\begin{equation}
\partial f(x_{0}):=\left\{ p\in X^{*}:\, f(x)\ge f(x_{0})+p(x-x_{0}),\,\forall x\in X\right\} ,\label{eq:def_subdif}
\end{equation}
while the $\e$-subdifferential of $f$ at $x_0$ is the set
\[
\partial_{\varepsilon}f(x_0):=\left\{ p\in X^{*}:\, f(x)\ge f(x_{0})+p(x-x_{0})-\varepsilon,\,\forall x\in X\right\} .
\]
If $x_{0}\not\in\dom f$, then $\partial f(x_{0})=\epar f(x_0)=\emptyset.$

For $x_0\in \dom f$ the set $\partial f(x_0)$ might be empty but the set
$\epar f(x_0)$ is always non-empty  for any $\varepsilon >0$.

It is clear that $\partial f$ is a \emph{monotone multivalued} mapping from $X$ to $X^*$, in the sense that
for all $x,\ y\in X$ and all $p\in \pa f(x)$, $q\in \pa f(y)$ it holds
$$
  \langle q-p, y-x \rangle \ge 0,
$$
which can be written also as
$$
  \langle \pa f(y)-\pa f(x), y-x \rangle \ge 0,\quad \forall x,y\in X.
$$

It is a well-known classical result due to Rockafellar that $\partial f$ is in fact \emph{maximal} monotone (see Theorem~\ref{mainthm}).
That is, $\partial f$ can not be extended to strictly larger monotone mapping from $X$ to $X^*$.

This statement goes back at least as far as \cite{minty}, where Minty proves it for a continuous convex function on Hilbert space
(see the discussion in \cite{rock66}).

Moreau \cite{moreau} gave a proof in Hilbert space using duality and Moreau-Yosida approximation.

A stumbling block to generalizing Minty's method is that for a lower semicontinuous convex
function which is not continuous, $\partial f$ might be empty at some points.
Also controlling the norms of the subgradients appearing in the proof is not trivial (see the discussion in \cite{rock70}).
On the other hand, the method of Moreau relies heavily on the fact that Hilbert space is canonicaly isometric to its dual.

The method of reducing the considerations to a line, used by Minty in \cite{minty} and generalised by Rockafellar in \cite{rock66}, is prevalent in the
subsequent proofs, like those of Taylor \cite{taylor}, Borwein \cite{borwein}, Thibault \cite{thibault}, the unpublished proof of
Zagrodny, using \cite{zagrodni}, and the recent one of Jules and Lasonde \cite{ju_lasond}. A textbook following this line of proof
is \cite{phelps1}.

The first complete proof in Banach space: that of Rockafellar \cite{rock70}, is a methodological break through showing that Fenchel conjugate and
duality can be used in non-reflexive case as well. The  recent proofs of Marques Alves and Svaiter \cite{alves_svatier} and
Simons \cite{simons_new} also use duality. It is mentioned in \cite{zalinescu} that in many textbooks authors prefer proving only the reflexive
case (where duality techniques are easier due to symmetry), e.g. \cite[p. 278]{zalinescu}.

The famous proof of Simons \cite{simons_lemma} (see also \cite{phelps2}) shows how one can pick a subgradient with controlled norm.

Like some others, our proof starts with
\[
\left\langle \partial f(x),x\right\rangle \ge 0,\quad\forall x\in \dom \pa f.
\]
 which is a sufficient condition of minimality of Minty type (see \cite{bocko}).

We prove it by adding a slack function to ensure that the sum is bounded below.

Finally, all tools we use had been well-known by 1970.

\section{Proof of the main result}
The following is proved in \cite{ju_lasond} for general lower semicontinuous function
through mean value inequality. We give a simple proof for the convex case.

\begin{prop}\label{prop:1}
Let $X$ be a Banach space and let $f:X\to\R\cup\left\{ +\infty\right\} $
be a proper, convex and lower semicontinuous function.

If $f:X\to\R\cup\left\{ +\infty\right\} $ satisfies
\begin{equation}\label{eq:0_mon}
\left\langle \partial f(x),x\right\rangle \ge 0,\quad\forall x\in \dom \pa f
\end{equation}
then $0\in\partial f(0)$.
\end{prop}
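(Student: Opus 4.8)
The plan is to argue by contradiction: suppose $0\notin\partial f(0)$. Since $\partial f(0)$ is the set of slopes $p$ with $f(x)\ge f(0)+p(x)$ for all $x$, the failure $0\notin\partial f(0)$ means $f(0)$ is not the infimum of $f$, i.e. there exists $\bar x$ with $f(\bar x)<f(0)$. (If $f(0)=+\infty$ the statement is vacuous since then $0\notin\dom\partial f$, so we may assume $f(0)\in\R$.) The idea is to add a coercive ``slack'' term to $f$ so that the perturbed function attains a minimum at some point $x_\ast$ near the origin, and then to extract from the optimality condition at $x_\ast$ a subgradient of $f$ that violates \eqref{eq:0_mon}.

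Concretely, I would fix $\lambda>0$ small and consider $g_\lambda(x):=f(x)+\lambda\|x\|$ — or, to keep things smooth and strictly convex, $g_\lambda(x):=f(x)+\lambda\|x\|^2$ if $X$ is amenable, but $\lambda\|x\|$ suffices and works in any Banach space. Because $f$ is proper, convex, lsc and minorized by an affine function, for $\lambda$ large $g_\lambda$ is coercive; but we do not even need an exact minimizer. Instead, invoke the Ekeland variational principle (a tool available by 1970 in spirit, though strictly speaking one can substitute Brøndsted–Rockafellar here, which is certainly pre-1970 and more in the paper's toolbox): for every $\varepsilon>0$ there is a point $x_\varepsilon$ with $g_\lambda(x_\varepsilon)\le \inf g_\lambda+\varepsilon$ and $0\in\partial_{\sqrt\varepsilon}\, g_\lambda(x_\varepsilon)$ with a controlled perturbation, or better: apply Brøndsted–Rockafellar to an $\varepsilon$-subgradient of $g_\lambda$ at $0$ to produce a nearby point $x_\varepsilon$ and an exact subgradient $p_\varepsilon\in\partial g_\lambda(x_\varepsilon)$ with $\|x_\varepsilon\|$ and $\|p_\varepsilon\|$ small. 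From $p_\varepsilon\in\partial g_\lambda(x_\varepsilon)=\partial f(x_\varepsilon)+\lambda\partial\|\cdot\|(x_\varepsilon)$, write $p_\varepsilon=q_\varepsilon+\lambda u_\varepsilon$ with $q_\varepsilon\in\partial f(x_\varepsilon)$, $\|u_\varepsilon\|\le 1$. Then $\langle q_\varepsilon,x_\varepsilon\rangle=\langle p_\varepsilon,x_\varepsilon\rangle-\lambda\langle u_\varepsilon,x_\varepsilon\rangle$, and the right-hand side can be made negative: $\langle p_\varepsilon,x_\varepsilon\rangle$ is tiny (both factors small), while $\lambda\langle u_\varepsilon,x_\varepsilon\rangle$ — here one must be careful, as this term is also small. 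So the bare slack $\lambda\|x\|$ does not obviously force negativity; the right device is to center the slack at the origin in a way that detects the descent direction toward $\bar x$.

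The cleaner route, and the one I would actually pursue: exploit that $f(\bar x)<f(0)$ to choose the slack as a linear functional. Since $f(\bar x)-f(0)<0$, and $\bar x$ is fixed, consider instead minimizing $h(t):=f(t\bar x)$ for $t\in[0,1]$ — reduction to a line — but the paper explicitly wants to avoid the line-reduction method, so I keep the slack-function approach and sharpen it: take $g(x):=f(x)+\varphi(x)$ where $\varphi$ is chosen convex, lsc, everywhere finite, with $\varphi(0)=0$, $\partial\varphi(0)\ni 0$, and $\varphi$ growing fast enough that $g$ attains its infimum at some $x_\ast$; the key extra requirement is that we can arrange the minimizer to satisfy $\langle\partial f(x_\ast),x_\ast\rangle<0$ outright. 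At the minimizer, $0\in\partial f(x_\ast)+\partial\varphi(x_\ast)$, so there is $q\in\partial f(x_\ast)$ with $-q\in\partial\varphi(x_\ast)$, whence $\langle q,x_\ast\rangle=-\langle(-q),x_\ast\rangle\le \varphi(0)-\varphi(x_\ast)=-\varphi(x_\ast)$. If $x_\ast\ne 0$ and $\varphi(x_\ast)>0$ we get $\langle q,x_\ast\rangle<0$, contradicting \eqref{eq:0_mon}; and $x_\ast\ne 0$ holds because $g(x_\ast)\le g(\bar x)=f(\bar x)+\varphi(\bar x)<f(0)=g(0)$ provided $\varphi(\bar x)<f(0)-f(\bar x)$, which we ensure by scaling $\varphi$ down. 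So the construction is: pick any convex, everywhere-finite, lsc, coercive $\varphi\ge 0$ with $\varphi(0)=0$, $0\in\partial\varphi(0)$ (e.g. a suitable power of the norm, or a sum of such dilated by a large parameter to get coercivity of $f+\varphi$), then rescale by a small positive constant so that $\varphi(\bar x)<f(0)-f(\bar x)$; verify $f+\varphi$ is coercive and lsc, hence attains its min at some $x_\ast\ne 0$; apply the sum rule for subdifferentials (valid since $\varphi$ is continuous, Moreau–Rockafellar) to get $q\in\partial f(x_\ast)$ with $\langle q,x_\ast\rangle\le-\varphi(x_\ast)<0$.

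The main obstacle is the existence of the minimizer $x_\ast$ in a general, possibly non-reflexive Banach space: coercivity plus lower semicontinuity does not give a minimizer without weak compactness of sublevel sets. I expect to resolve this exactly as the paper hints — not by claiming an exact minimizer, but by taking an approximate minimizer and applying the Brøndsted–Rockafellar theorem (an $\varepsilon$-subdifferential ``denting'' result) to replace it by a genuine pair $(x_\ast,q)$ with $q\in\partial f(x_\ast)$, $\langle q,x_\ast\rangle$ still negative, at the cost of an $\varepsilon$ that we send to $0$. Making the estimate $\langle q,x_\ast\rangle\le -\varphi(x_\ast)+O(\sqrt\varepsilon)$ survive the limit, with $\varphi(x_\ast)$ bounded away from $0$ uniformly, is the delicate bookkeeping; one secures it by first fixing the scale of $\varphi$ from $f(0)-f(\bar x)$ and only then letting $\varepsilon\to 0$, so that $x_\ast$ stays in the region $\{\varphi\ge\delta\}$ for a fixed $\delta>0$.
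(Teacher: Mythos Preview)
Your toolkit matches the paper's exactly: add the slack $a\|x\|^2$, invoke the sum rule $\partial(f+a\|\cdot\|^2)=\partial f+\partial(a\|\cdot\|^2)$, note that $f_a:=f+a\|\cdot\|^2$ is bounded below, and apply Br\o ndsted--Rockafellar to an $\varepsilon$-minimizer to obtain $(y_\varepsilon,p_\varepsilon)$ with $p_\varepsilon\in\partial f_a(y_\varepsilon)$ and $\|p_\varepsilon\|\le\sqrt{\varepsilon}$. Only the final step is organized differently. The paper argues \emph{directly}: the hypothesis \eqref{eq:0_mon} together with $\langle\partial(a\|\cdot\|^2)(x),x\rangle\ge a\|x\|^2$ gives $\|p\|\ge a\|x\|$ for every $p\in\partial f_a(x)$; combined with $\|p_\varepsilon\|\le\sqrt{\varepsilon}$ this forces $y_\varepsilon\to 0$, so $0$ minimizes $f_a$ for each $a>0$, and letting $a\downarrow 0$ yields $0\in\partial f(0)$. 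You instead argue by contradiction: pick $\bar x$ with $f(\bar x)<f(0)$, fix $a$ small so that $a\|\bar x\|^2<f(0)-f(\bar x)$, use lower semicontinuity of $f_a$ at $0$ to keep $y_\varepsilon$ uniformly away from $0$, and then $\langle q_\varepsilon,y_\varepsilon\rangle\le \|y_\varepsilon\|\bigl(\sqrt{\varepsilon}-a\|y_\varepsilon\|\bigr)<0$ contradicts \eqref{eq:0_mon}. Both routes are correct; the paper's is a bit cleaner since it never introduces an auxiliary point $\bar x$, never needs the lsc-at-$0$ step to exclude $y_\varepsilon\to 0$, and treats $a$ as a free parameter sent to $0$ at the end rather than fixed from $\bar x$ at the start.

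One genuine slip: the case $f(0)=+\infty$ is \emph{not} vacuous. If $0\notin\dom f$ then $\partial f(0)=\emptyset$, so the conclusion $0\in\partial f(0)$ is a substantive assertion that would be false---the proposition in particular claims $0\in\dom f$. Your contradiction argument in fact covers this case (take any $\bar x\in\dom f$, so $f(\bar x)<+\infty=f(0)$, and proceed as before), but the sentence dismissing it as vacuous should be deleted.
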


\begin{proof}
  For $a>0$ let $g_a(x) := a\|x\|^2$.

  If $p\in\partial g_a(x)$ then by definition $p(0-x)\le g_a(0) - g_a(x)$, that is,
  \begin{equation}\label{new:b}
    p(x) \ge a\|x\|^2.
  \end{equation}
  Let
  \begin{equation}\label{new:c}
    f_a(x) := f(x) + g_a(x).
  \end{equation}
  Since $g_a$ is continuous, the Sum Theorem, see for example \cite{zalinescu}, implies that $\dom \partial f_a = \dom \partial f$ and
  \begin{equation}\label{new:d}
    \partial f_a(x) = \partial f(x) + \partial g_a(x),\quad \forall x\in\dom\partial f.
  \end{equation}
  From \eqref{eq:0_mon}, \eqref{new:b} and \eqref{new:d} it follows that
  $$
    \langle\partial f_a (x),x\rangle \ge a\|x\|^2,\quad\forall x\in\dom\partial f.
  $$
  Consequently,
  \begin{equation}\label{new:e}
    \forall p\in\partial f_a(x) \Rightarrow \|p\|\ge a\|x\|.
  \end{equation}
  On the other hand, $f_a$ is bounded below for each $a > 0$. Indeed, take $x_0\in\dom f$. Since $f$ is
  lower semicontinuous there is $\delta > 0$ such that $\inf f(x_0+\delta B_X) > -\infty$, where $B_X$ is the closed unit ball.
  Take $c\in\mathbb{R}$ such that $c<\inf f(x_0+\delta B_X)$.
  By Hahn-Banach Theorem, see for example \cite{holmes}, we can separate the epigraph of $f$, that is the set $\{(x,t):\ f(x)\le t\}$, from the set $(x_0+\delta B_X)\times(-\infty,c]$. So, there is  $(p,r)\in X^*\times\mathbb{R}\setminus (0,0)$
  such that
  $$
    p(x) + rt \ge p(y) + rs,\quad \forall x\in\dom f,\ \forall t\ge f(x),\ \forall y\in x_0+\delta B_X,\ \forall s \le c.
  $$
  Setting $x=x_0$ we see that $\sup\{rs:\ s\le c\}<\infty$ which is only possible if $r\ge 0$. But if $r=0$ then setting $x=x_0$ and $y=x_0+h$ we see that $p(x_0)\ge p(x_0)+p(h)$ for all $h\in\delta B_X$ which implies $p=0$, contradiction. Therefore, $r>0$ and we can divide the above inequality by $r$ as well use $t=f(x)$ and $s=c$ to obtain for $q=p/r$ that
  $$
    q (x) + f(x) \ge q(y) + c,\quad \forall x\in\dom f,\ \forall y\in x_0+\delta B_X,
  $$
  Set $y=x_0$ and rearrange to get for $r= q(x_0)+c$
  $$
    f(x) \ge - q(x) +r \ge -\|q\|\|x\| +r,\quad \forall x\in X.
  $$ 
  Therefore, $f_a(x) \ge r + \|x\|(a\|x\|-\|q\|)$ and $f_a$ is bounded below.

  Let $x_n$ be a minimising sequence for $f_a$. So, $f_a(x_n) < f_a(x_n)+\varepsilon_n$ for some
  $\varepsilon_n\to 0$. Equivalently, $0\in\partial_{\varepsilon_n}f_a(x_n)$.

  From Br\o ndsted-Rockafellar Theorem, see \cite{BRO_ROCK}, it follows that there are
  $y_n\in x_n+\sqrt{\varepsilon_n} B_X$ and $p_n\in\partial f_a (y_n)$ such that
  $\|p_n\| \le \sqrt{\varepsilon_n}$. From this and \eqref{new:e} it follows that
  $$
    \|y_n\|\le\frac{\sqrt{\varepsilon_n}}{a}.
  $$
  Therefore, $x_n\to 0$. Since $f_a$ is lower semicontinuous, $0$ is the global
  minimum of $f_a$.

  In other words,
  $$
    f_a(x) \ge f_a(0) \iff f(x)\ge f(0) - a\|x\|^2,\quad\forall x\in X.
  $$
  Since $a>0$ was arbitrary, $0$ is a global minimum of $f$, or, equivalently,
  $0\in\partial f(0)$.
\end{proof}

The Rockafellar's Theorem follows by an easy and well known argument (see for example \cite{phelps1},  p. 59):

\begin{thm}\label{mainthm}
(Rockafellar~\cite{rock70}) Let X be a Banach space and let $f:X\to\R\cup\left\{ +\infty\right\} $ be a proper, convex
and lower semicontinuous function. Then $\partial f$ is a maximal monotone mapping
from $X$ to $X^{*}$.\end{thm}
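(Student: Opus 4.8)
The plan is to derive maximality from Proposition~\ref{prop:1} by a simple translation. Recall what must be shown: $\partial f$ is maximal monotone precisely when every pair $(x_0,p_0)\in X\times X^*$ that is monotonically related to the graph of $\partial f$ — meaning $\langle p-p_0,x-x_0\rangle\ge 0$ for all $x\in X$ and all $p\in\partial f(x)$ — already satisfies $p_0\in\partial f(x_0)$. So I would fix such a pair $(x_0,p_0)$ and aim to conclude $p_0\in\partial f(x_0)$.

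First I would introduce the shifted function $h(x):=f(x+x_0)-p_0(x)$. Since $\dom h=\dom f-x_0\neq\emptyset$, $h$ is proper; it is clearly convex and lower semicontinuous, being a sum of a translate of $f$ and a continuous linear functional. Because $p_0$ is continuous linear, one has directly from the definition $\partial h(y)=\partial f(y+x_0)-p_0$ and $\dom\partial h=\dom\partial f-x_0$ (no Sum Theorem is even needed here). Thus Proposition~\ref{prop:1} will be applicable to $h$ as soon as its hypothesis \eqref{eq:0_mon} is checked.

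Next I would translate the monotone-relatedness of $(x_0,p_0)$ into the hypothesis of Proposition~\ref{prop:1} for $h$: writing $x=y+x_0$ and $q=p-p_0$, the inequality $\langle p-p_0,x-x_0\rangle\ge 0$ becomes $\langle q,y\rangle\ge 0$ for every $y\in\dom\partial h$ and every $q\in\partial h(y)$, which is exactly \eqref{eq:0_mon} for $h$. Proposition~\ref{prop:1} then gives $0\in\partial h(0)$, i.e. $h(y)\ge h(0)$ for all $y$, i.e. $f(y+x_0)\ge f(x_0)+p_0(y)$ for all $y$; substituting back $x=y+x_0$ yields $f(x)\ge f(x_0)+p_0(x-x_0)$ for all $x\in X$, that is, $p_0\in\partial f(x_0)$. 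Hence $\partial f$ admits no proper monotone extension, so it is maximal monotone.

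The argument is essentially bookkeeping, so I do not expect a genuine obstacle; all the real content sits in Proposition~\ref{prop:1}. The only point deserving a word of care is that the reformulation of ``$(x_0,p_0)$ is monotonically related to the graph of $\partial f$'' as condition \eqref{eq:0_mon} for $h$ does not presuppose $x_0\in\dom f$ — that fact is recovered a posteriori, since $0\in\partial h(0)$ forces $0\in\dom h$, i.e. $x_0\in\dom f$.
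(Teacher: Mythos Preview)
Your proof is correct and follows essentially the same route as the paper: define the translated function $\bar f(x)=f(x+x_0)-p_0(x)$, check that the monotone-relatedness hypothesis becomes \eqref{eq:0_mon} for $\bar f$, and apply Proposition~\ref{prop:1} to get $0\in\partial\bar f(0)$, i.e.\ $p_0\in\partial f(x_0)$. You have simply made explicit a few verifications (properness, the subdifferential identity, and the a posteriori recovery of $x_0\in\dom f$) that the paper leaves to the reader.
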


\begin{proof}
Let $(y,q)\in X\times X^{*}$ be in monotone relation  to the
graph of $\partial f$, that is
\begin{equation}\label{eq:mon_rel}
\left\langle \partial f(x)-q,x-y\right\rangle \ge0,\quad\forall x\in \dom \pa f.
\end{equation}
Consider the function
\[
\bar{f}(x):=f(x+y)-q(x).
\]
It is immediate to check
that (\ref{eq:mon_rel}) implies (\ref{eq:0_mon}) for $\bar{f}$.
By Proposition~\ref{prop:1} we get $0\in\partial\bar{f}(0)$
which easily translates to $q\in\partial f(y)$. Therefore, $\partial f$
cannot be properly  extended in a monotone way.
\end{proof}

\end{document}